\documentclass{amsart}
\usepackage{amssymb, amsmath, amsthm}
\usepackage[all]{xy}
\usepackage{enumerate}

\newdir^{ (}{{}*!/-8pt/@^{(}}

\def\ZZ{\mathbb{Z}}
\def\PP{\mathbb{P}}
\def\P1{\PP^1}
\def\AAA{\mathbb{A}}
\def\A2{\AAA^2}

\def\GL{\mathrm{GL}}

\def\Ort{\mathrm{O}}
\def\SO{\mathrm{SO}}

\def\PGL{\mathrm{PGL}}

\def\ol{\overline}

\def\diag #1{\langle #1 \rangle}
\def\diagm #1{\mathrm{diag}(#1)}
\def\quat #1#2{\left(#1,#2\right)_2}

\newtheorem{theorem}{Theorem}[section]
\newtheorem{lemma}[theorem]{Lemma}

\theoremstyle{definition}

\newtheorem{remark}[theorem]{Remark}
\newtheorem{constr}[theorem]{Construction}
\newtheorem{example}[theorem]{Example}

\begin{document}

\title{Finite group actions on curves of genus zero}

\author{Mario Garcia-Armas}
\address{Department of Mathematics, University of British Columbia, Vancouver, \newline \indent %
BC V6T 1Z2, Canada} 
\email{marioga@math.ubc.ca}

\thanks{The author is partially supported by a Four Year Fellowship, University of British Columbia.}

\begin{abstract}
We classify, up to conjugacy, the finite (constant) subgroups $G$ of adjoint absolutely simple algebraic groups of type $A_1$ over an arbitrary field $k$ of characteristic not $2$.
\end{abstract}

\keywords{finite group actions, special orthogonal groups, quadratic forms}

\subjclass[2010]{20G15, 20G10, 11E20}

\maketitle

\section{Introduction}

The finite subgroups of $\PGL_2(\mathbb{C})$ have been known for over a century: these are cyclic, dihedral and the so-called polyhedral groups $A_4$, $S_4$ and $A_5$ (see, e.g., \cite{Kl56}). Any two isomorphic finite subgroups of $\PGL_2(\mathbb{C})$ are conjugate. All of the above remains true if one replaces $\mathbb{C}$ by any algebraically closed field $k$ and asks about finite subgroups of order prime to $\mathrm{char}(k)$ (see \cite[\S 2.5]{Se72}). Using this result as a starting point, A. Beauville \cite{Beau10} classified, up to conjugacy, the finite subgroups $G$ of $\PGL_2(k)$ over an arbitrary field $k$, under the assumption that $|G|$ is prime to $\mathrm{char}(k)$. X. Faber \cite{Fa11} completed this picture by classifying the $p$-irregular subgroups of $\PGL_2(k)$, i.e., subgroups whose order is divisible by $p = \mathrm{char}(k)$, and describing their conjugacy classes.

In this paper, we are interested in studying the finite (constant) subgroups of (possibly non-split) adjoint absolutely simple algebraic groups of type $A_1$ over a field $k$, as well as their conjugacy classes. In doing so, we restrict our attention to fields $k$ of characteristic different from $2$.

It is well known that an adjoint absolutely simple algebraic group of type $A_1$ over $k$ is of the form $\PGL_1(A)$ for some quaternion algebra $A = \quat{a}{b}$, where $a, b \in k^{\times}$ (see, e.g., \cite[\S 26.A and 26.B]{KMRT98}). As usual, the symbol $\quat{a}{b}$ denotes the 4-dimensional $k$-algebra with basis $1, i, j, ij$, subject to the relations $i^2=a, j^2=b, ij=-ji$. Alternatively, we may regard $\PGL_1(A)$ as the automorphism group of the conic associated to the quadratic form $q = \diag{-a,-b,ab}$, which implies the existence of an isomorphism $\PGL_1(A) \cong \SO(q)$ (see, e.g., \cite[Cor. 69.6]{EKM08}). Recall that any smooth projective curve of genus $0$ is isomorphic to one such conic, so we completely classify faithful actions of finite groups on curves of genus $0$, up to equivalence.

We will thus be interested in finite constant subgroups of $\SO(q)$ for any ternary nondegenerate quadratic form $q$. Replacing $q$ by a scalar multiple of itself does not alter its isometry group, so we may assume throughout that $q$ has discriminant $1$.

We first deal with the $p$-irregular subgroups of $\SO(q)$. Interestingly, we show that this case reduces entirely to the classification in \cite{Fa11}.

\begin{theorem} \label{Theorem_p_irregular}
Let $k$ be a field of characteristic $p>2$ and suppose that $\SO(q)$ contains a $p$-irregular subgroup. Then the quadratic form $q$ is isotropic, i.e., there exists an isomorphism $\SO(q) \cong \PGL_2$.
\end{theorem}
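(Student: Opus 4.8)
The plan is to prove the contrapositive. By the identifications recalled above, $\SO(q) \cong \PGL_1(A)$ for the quaternion algebra $A = \quat{a}{b}$, and $q$ is isotropic precisely when $A$ is split (equivalently, when $\SO(q) \cong \PGL_2$). So I would assume that $A$ is a division algebra and show that $\PGL_1(A) = A^{\times}/k^{\times}$ contains no $p$-irregular subgroup. Since any finite group whose order is divisible by $p$ contains an element of order $p$ by Cauchy's theorem, it suffices to rule out elements of order $p$ in $\PGL_1(A)$.

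Suppose, for contradiction, that $\bar{g} \in A^{\times}/k^{\times}$ has order $p$, and lift it to some $g \in A^{\times}$. Then $g^p = c$ for some $c \in k^{\times}$, while $g \notin k^{\times}$. The first key step is to analyze the commutative $k$-subalgebra $L = k(g) \subseteq A$. Because $A$ is a division algebra, $L$ is a finite-dimensional integral domain, hence a field, and $k \subsetneq L$. Since $A$ is a quaternion division algebra, the double centralizer theorem forces every subfield of $A$ containing $k$ to have degree $1$ or $2$ over $k$; as $g \notin k$, this gives $[L:k] = 2$.

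The second step is to exploit the relation $g^p = c$ in characteristic $p$, splitting into two cases according to whether $c$ is a $p$-th power in $k$. If $c = \mu^p$ with $\mu \in k$, then $(g-\mu)^p = g^p - \mu^p = 0$, so $g-\mu$ is nilpotent in the division algebra $A$; hence $g = \mu \in k$, contradicting $g \notin k$. If instead $c \notin k^p$, then $x^p - c$ is irreducible over $k$, so $[k(g):k] = p$; but $p > 2 = [L:k]$, again a contradiction. Since both cases are impossible, no element of order $p$ exists, which is what I want.

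I expect the main subtlety to lie in the second step rather than in the purely algebraic reduction: one must separate the inseparable situation (where $\mathrm{char}(k) = p$ is used to manufacture a nilpotent, and then the absence of zero divisors in $A$ is invoked) from the irreducible situation (where a degree-$p$ subfield cannot sit inside a quaternion algebra). It is precisely here that the hypothesis $p > 2$ is essential: it guarantees $[L:k] = 2 < p$, so a degree-$p$ subfield is genuinely excluded — an obstruction that would disappear if $p = 2$, consistent with the paper's standing assumption $\mathrm{char}(k) \neq 2$.
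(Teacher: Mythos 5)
Your proof is correct, but it takes a genuinely different route from the paper's. The paper never leaves the orthogonal group: it picks an element $M$ of order $p$ in the $p$-irregular subgroup (Cauchy's theorem, as in your reduction), observes that $(M-I)^p = M^p - I = 0$ in characteristic $p$, so that $M$ is a nontrivial unipotent element of $\SO(q)(k)$, and then shows directly that such an element produces an isotropic vector: choosing $v_1, v_2$ with $Mv_1 = v_1$ and $Mv_2 = v_1 + v_2$, invariance of the bilinear form gives $b_q(v_1,v_2) = b_q(Mv_1,Mv_2) = q(v_1) + b_q(v_1,v_2)$, whence $q(v_1) = 0$. You instead transfer the problem to the quaternion algebra $A$ via $\SO(q) \cong \PGL_1(A)$ and argue by contrapositive inside the division algebra. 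Your two cases are an artifact of working projectively: the paper's $M$ satisfies $M^p = I$ on the nose, whereas your lift $g$ only satisfies $g^p = c \in k^\times$. Your Case 1 ($c \in k^p$) is exactly the paper's freshman's-dream unipotence step, except that it terminates instantly because a division ring has no nonzero nilpotents, and your Case 2 ($c \notin k^p$, killed by the degree bound $[k(g):k] \leq 2 < p$) simply does not arise in the paper. What your route buys: the core argument is standard noncommutative algebra and generalizes at once to show that $\PGL_1(D)(k)$ has no $p$-torsion for any central division $k$-algebra $D$ of degree prime to $p = \mathrm{char}(k)$; it also isolates cleanly where $p > 2$ enters. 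What it costs: you must import as known facts the equivalence between isotropy of $q$ and splitting of $A$, as well as the identification $\PGL_1(A)(k) = A^\times/k^\times$ (Hilbert 90), whereas the paper needs only the single implication that isotropy of $q$ forces $\SO(q) \cong \PGL_2$, which both proofs invoke to conclude. Both arguments are complete; yours trades the paper's short bilinear-form computation for structure theory of division algebras.
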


It remains to classify the $p$-regular subgroups $G$ of $\SO(q)$, so we may assume henceforth that $\textrm{char}(k)$ is prime to $|G|$. Over an algebraic closure $\ol{k}$ of $k$, we have that $\SO(q)(\ol{k}) \cong \PGL_2(\ol{k})$. Thus any finite subgroup $G$ of $\SO(q)$ embeds into $\PGL_2(\ol{k})$, so it must be isomorphic to $\ZZ/n\ZZ$, $D_{2n}$ (the dihedral group of $2n$ elements), $A_4$, $S_4$ or $A_5$. Theorem \ref{Theorem_Groups} below classifies these subgroups up to isomorphism and Theorem \ref{Theorem_Conjugacy} up to conjugacy. Taking $q = \diag{-1,-1,1}$ in these theorems, we recover the results in \cite{Beau10}.

We will prove Theorem \ref{Theorem_Groups} in Section \ref{Sect_FiniteSubg}. Note that the classification of polyhedral groups in parts (b) and (c)
is hinted at in \cite{Se78}; here we make it explicit for completeness. Throughout the paper, we denote a primitive $n$-th root of $1$ by $\omega_n$, and we set $\alpha_n = (\omega_n + \omega_n^{-1})/2$ and $\beta_n = \alpha_n^2-1$.  

\begin{theorem} \label{Theorem_Groups}
Let $q$ be a nondegenerate quadratic form of discriminant $1$ and let $q_0 = \diag{1,1,1}$. 
\begin{enumerate}[\upshape(a)]
\item The group $D_{4} \cong (\ZZ/2\ZZ)^2$ is always contained in $\SO(q)$. For $n \geq 3$, the group $\SO(q)$ contains $\ZZ/n\ZZ$ and $D_{2n}$ if and only if $\alpha_n \in k$ and $q$ represents $-\beta_n$.
\item The group $\SO(q)$ contains $A_4$ and $S_4$ if and only if $q \simeq q_0$.
\item The group $\SO(q)$ contains $A_5$ if and only if $\sqrt{5}\in k$ and $q \simeq q_0$.
\end{enumerate}
\end{theorem}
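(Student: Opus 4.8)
The plan is to analyze, for each isomorphism type $G$ on the list, the possible faithful homomorphisms $G \to \SO(q)$ through the induced orthogonal representation on the underlying $3$-dimensional space $V$, and to read off from it the invariants of $q$. Since we are in the $p$-regular setting, each nontrivial $g \in G$ acts semisimply, and being orthogonal of determinant $1$ it has $\ol{k}$-eigenvalues $1, \zeta, \zeta^{-1}$; consequently its fixed space is a $k$-rational line $L$ on which $q$ is nondegenerate, and $g$ acts on the nondegenerate plane $W = L^{\perp}$ as an element of $\SO(q|_W)$. The second recurring tool is the normalization $\disc(q) = 1$, which gives $\disc(q|_W) \equiv q(L)^{-1} \equiv q(L) \pmod{(k^\times)^2}$ and thereby links the norm of the rotation axis to the geometry of the rotation plane.

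For part (a), the three sign changes $\diagm{1,-1,-1}, \diagm{-1,1,-1}, \diagm{-1,-1,1}$ lie in $\SO(q)$ for any diagonalization of $q$ and generate $(\ZZ/2\ZZ)^2 \cong D_4$, settling that subcase. For $n \geq 3$ I would take a generator $g$ of $\ZZ/n\ZZ$, with eigenvalues $1, \omega_n, \omega_n^{-1}$; then $\mathrm{tr}(g) = 1 + 2\alpha_n \in k$ forces $\alpha_n \in k$, and an order-$n$ rotation $g|_W$ on the binary form $q|_W$ exists exactly when $\disc(q|_W) \equiv -\beta_n$, which I would verify through the explicit parametrization of $\SO(\diag{1,\delta})$ (the rotation entries satisfy $\delta b^2 = -\beta_n$). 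Combined with the discriminant identity this reads $q(L) \equiv -\beta_n$, i.e. $q$ represents $-\beta_n$; conversely, from an anisotropic vector of norm $-\beta_n$ together with $\alpha_n \in k$ one reconstructs $g$ directly. For $D_{2n}$ I would adjoin the involution $h$ acting as $-1$ on $L$ and as a reflection of $W$: its determinant is $(-1)(-1)=1$, so $h \in \SO(q)$, and since the reflection conjugates $g|_W$ to its inverse, $\langle g, h\rangle \cong D_{2n}$ under the same two conditions.

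For parts (b) and (c) the necessity is extracted from the Klein four subgroup $V_4 \leq A_4$. Its three commuting involutions, being simultaneously diagonalizable in a common orthogonal eigenbasis as $\diagm{1,-1,-1}$ and its coordinate permutations, yield mutually perpendicular axes $L_1, L_2, L_3$ and $q \simeq \diag{a_1,a_2,a_3}$; since a $3$-cycle $\sigma \in A_4$ cyclically permutes these involutions, hence the axes, and is an isometry, the norms satisfy $a_1 \equiv a_2 \equiv a_3 \pmod{(k^\times)^2}$. Thus $q \simeq \diag{a,a,a}$, and $\disc(q) = a^3 \equiv 1$ forces $a \equiv 1$, so $q \simeq q_0$. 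As $A_4 \leq S_4$ and $A_4 \leq A_5$ (a point stabilizer), the conclusion $q \simeq q_0$ holds in all three cases. The extra hypothesis $\sqrt{5}\in k$ for $A_5$ I would obtain from an order-$5$ element, whose trace in the $3$-dimensional representation is $1 + 2\cos(2\pi/5) = (1+\sqrt{5})/2$ (or the conjugate value $(1-\sqrt{5})/2$); membership of this trace in $k$ forces $\sqrt{5}\in k$. For the converse implications I would exhibit the tetrahedral, octahedral and icosahedral rotation groups inside $\SO(q_0)$, defined over the prime field for $A_4, S_4$ and over $\mathbb{Q}(\sqrt{5})$ for $A_5$.

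The main obstacle I anticipate is the exact matching of conditions in the cyclic case: beyond necessity, proving sufficiency requires reconstructing $g \in \SO(q)(k)$ from the data ``$\alpha_n \in k$ and $q$ represents $-\beta_n$'', which amounts to controlling the $k$-rational structure of the rank-one torus $\SO(q|_W)$ and checking that $\beta_n \neq 0$, so that $L$ is genuinely anisotropic (this is precisely where $n \geq 3$ enters). The polyhedral necessity arguments are comparatively clean once the $V_4$-decomposition is in hand; the remaining care there is to confirm that the explicit matrix realizations generate $A_4$, $S_4$ and $A_5$ and require no field extension beyond those asserted.
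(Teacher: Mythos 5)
Your proposal is correct, but in its two main steps it takes a genuinely different route from the paper. In part (a), after the same trace argument giving $\alpha_n \in k$, the paper shows that $q$ becomes isotropic over $k(\omega_n)=k(\sqrt{\beta_n})$ and extracts ``$q$ represents $-\beta_n$'' from the decomposition of \cite[Prop.\ 34.8]{EKM08}, then settles the converse (and the passage to $D_{2n}$) by writing explicit $3\times 3$ matrices $s,t$ preserving $\diag{-\beta_n,-\gamma,\beta_n\gamma}$; you instead split $V = L \perp W$ over $k$ along the rational fixed axis and read everything off the parametrization of the binary group: given $\alpha_n\in k$, an element of trace $2\alpha_n$ exists in $\SO(q|_W)(k)$ if and only if $\disc(q|_W)\equiv -\beta_n$, while $\disc(q|_W)\equiv q(L)$ modulo squares by the normalization $\disc(q)=1$. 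Your version is more self-contained (no external citation), handles both directions by one mechanism, and produces $D_{2n}$ by adjoining $(-1)\oplus(\text{reflection})$; it does require the verifications you correctly flag (anisotropy of the fixed line for $n\ge 3$, which follows since the $1$-eigenvector is $b_q$-orthogonal to the other two eigenlines, and $\beta_n\neq 0$), whereas the paper's explicit matrices are not wasted effort: they are reused verbatim in Section 3 for the centralizer and normalizer computations. In part (b), the paper's necessity argument is representation-theoretic: the inclusion $A_4\subset \SO(q)$ is absolutely irreducible, Schur's lemma gives at most one invariant form up to scalar, hence $q\simeq c\,q_0$ and the discriminant kills $c$; your Klein-four argument (simultaneous diagonalization gives $q\simeq\diag{a_1,a_2,a_3}$ along three rational orthogonal axes, a $3$-cycle permutes the axes isometrically so $a_1\equiv a_2\equiv a_3$, and $\disc(q)=1$ finishes) is more elementary and makes explicit a point the paper leaves implicit, namely why the abstract inclusion is $k$-equivalent to the standard tetrahedral representation. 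Part (c) and all sufficiency directions coincide with the paper's (explicit polyhedral rotation groups preserving $q_0$; the trace of an order-$5$ element forcing $\sqrt{5}\in k$). One small repair: a generator of $\ZZ/n\ZZ$ has eigenvalues $1,\zeta,\zeta^{-1}$ for \emph{some} primitive $n$-th root $\zeta$, not necessarily your fixed $\omega_n$; either replace the generator by a suitable power, as the paper does, or invoke the paper's remark that the condition ``$\alpha_n\in k$ and $q$ represents $-\beta_n$'' is independent of this choice.
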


We will prove Theorem \ref{Theorem_Conjugacy} in Section \ref{Sect_Conj}. Our argument relies on Galois cohomology techniques, building on the approach taken in [Beau10]. 

\begin{theorem} \label{Theorem_Conjugacy}
Let $q = \diag{-a,-b,ab}$ be a nondegenerate quadratic form.
\begin{enumerate}[\upshape(a)] 
\item The conjugacy classes of $\ZZ/2\ZZ$ inside $\SO(q)$ are in natural bijective correspondence with the set $D(q) \subset k^\times/k^{\times 2}$ consisting of nonzero square classes represented by $q$.
\item Let $Q_{a,b} = \{(x,y) \in (k^\times/k^{\times 2})^2|\quat{ax}{by} \cong \quat{a}{b}\}$. The symmetric group $S_3 = \{s,t|s^3 = t^2 = (st)^2 = 1\}$ acts on $Q_{a,b}$ by setting $s\cdot (x,y) = (-bxy,abx)$ and $t\cdot(x,y) = (x,-axy)$ for all $(x,y) \in Q_{a,b}$. Then the conjugacy classes of $(\ZZ/2\ZZ)^2$ inside $\SO(q)$ are in natural bijective correspondence with $Q_{a,b}/S_3$.
\item There is at most one conjugacy class of subgroups isomorphic to $\ZZ/n\ZZ$ ($n \geq 3$) inside $\SO(q)$.
\item Suppose that $D_{2n}$ is contained in $\SO(q)$ ($n \geq 3$). The set $D(\diag{1,-\beta_n})$ consisting of nonzero square classes represented by $\diag{1, -\beta_n}$ forms a subgroup of $k^\times/k^{\times 2}$. The class $\frac{\alpha_n+1}{2} \cdot k^{\times 2}$ is contained in $D(\diag{1,-\beta_n})$; let $C$ be the subgroup generated by this class.  Then the conjugacy classes of $D_{2n}$ inside $\SO(q)$ are in natural bijective correspondence with $D(\diag{1,-\beta_n})/C$. 
\item There is at most one conjugacy class of subgroups isomorphic to $A_4$, $S_4$ or $A_5$ inside $\SO(q)$.  
\end{enumerate}
\end{theorem}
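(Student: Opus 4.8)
The unifying device is the standard twisting bijection: if $G_0\subset\SO(q)(k)$ is a fixed finite subgroup and $N=N_{\SO(q)}(G_0)$ denotes its scheme-theoretic normalizer (a smooth algebraic group), then the $\SO(q)(k)$-conjugacy classes of subgroups that become conjugate to $G_0$ over $\ol k$ are in canonical bijection with the pointed kernel $\ker\!\big(H^1(k,N)\to H^1(k,\SO(q))\big)$, exactly as in [Beau10]. Two soft facts will be used throughout: since $\dim q=3$ is odd, $-\Id\in\Ort(q)\setminus\SO(q)$ is central, so $\SO(q)$- and $\Ort(q)$-conjugacy of subgroups coincide; and every involution $\sigma\in\SO(q)$ is determined by its axis $\ell=\Ker(\sigma-\Id)$, an anisotropic line, with $\sigma=-\Id$ on $\ell^\perp$. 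The plan is to compute $N$ in each case and evaluate this kernel, which in practice reduces to the arithmetic of the binary form $q|_{\ell^\perp}$ via Witt's theorems.

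For (a), $N_{\SO(q)}(\langle\sigma\rangle)=C_{\SO(q)}(\sigma)\cong\Ort(q|_{\ell^\perp})$, and the complete invariant of $\sigma$ is the square class $q(v)$ of an axis vector $v$; Witt's extension theorem shows two involutions are conjugate iff their axis classes agree, giving the bijection with $D(q)$. For (b), commuting involutions in $\SO(q)$ have mutually orthogonal axes, so a copy of $(\ZZ/2\ZZ)^2$ is literally the datum of an unordered orthogonal frame, i.e.\ a diagonalisation $q\cong\diag{-ax,-by,abxy}$. Such a diagonalisation exists precisely when this ternary form is isometric to $q$; since $\diag{-ax,-by,abxy}$ is the pure-quaternion norm form of $\quat{ax}{by}$, this is exactly the condition $(x,y)\in Q_{a,b}$. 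Conjugacy forgets the ordering of the frame, and a direct check matches the reordering action on square-class triples with the stated formulas for $s$ and $t$; hence the classes biject with $Q_{a,b}/S_3$.

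Parts (c) and (e) both assert rigidity, and I would treat (e) via absolute irreducibility. The groups $A_4,S_4,A_5$ act through absolutely irreducible three-dimensional representations, and by Theorem \ref{Theorem_Groups} they occur only for $q\simeq q_0$, so these representations are defined over $k$. Two $\ol k$-conjugate embeddings then give $\ol k$-isomorphic absolutely irreducible $k$-representations; Schur's lemma makes the intertwiner unique up to a scalar, so the descent cocycle lands in $H^1(k,\mathbb{G}_m)=0$ (Hilbert 90) and the embeddings are already $\GL_3(k)$-conjugate, hence $\Ort(q_0)(k)$- and thus $\SO(q_0)(k)$-conjugate. For (c) the three-dimensional representation splits as axis $\oplus$ rotation-plane, so irreducibility fails; instead I would use that the axis $\ell=\Ker(\rho-\Id)$ is $k$-rational with square class forced to be $-\beta_n$ (from $\disc q=1$ together with the existence of the planar rotation), reduce two generators to a common axis by Witt, and observe that the $k$-rational $n$-torsion subgroup of the one-dimensional torus $\SO(q|_{\ell^\perp})$ is unique; matching generators up to the interchange $\rho\leftrightarrow\rho^{-1}$ (allowed since $\langle\rho\rangle=\langle\rho^{-1}\rangle$) then yields at most one class.

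I expect (d) to be the crux. Using (c) I would fix a single $\ZZ/n$ with axis $\ell$ and plane form $q'=q|_{\ell^\perp}$ (of discriminant $-\beta_n$), so that conjugacy classes of $D_{2n}\supseteq\ZZ/n$ reduce to $N_{\SO(q)}(\ZZ/n)$-orbits of the inverting involutions, whose axes are anisotropic lines in $\ell^\perp$. The natural invariant is the square class of such an axis; the subtlety is that a single $D_{2n}$ has reflection axes of two different classes differing by the one-step ratio $q'(w_1)/q'(w_0)$, which an explicit eigenvector computation for the order-$n$ rotation evaluates to $2\beta_n(\alpha_n-1)\equiv\tfrac{\alpha_n+1}{2}$. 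Thus the well-defined invariant is a class modulo $C=\langle\tfrac{\alpha_n+1}{2}\rangle$, which is trivial exactly when $n$ is odd (then $\cos(\pi/n)\in k$ is forced and $\tfrac{\alpha_n+1}{2}$ is a square). The hard part is bookkeeping the two normalisations at once: proving $\tfrac{\alpha_n+1}{2}$ is a norm from $k[\sqrt{\beta_n}]$ so that it lies in the subgroup $D(\diag{1,-\beta_n})$, identifying $D(q')$ as a coset of this norm subgroup and trivialising the resulting torsor by a basepoint, and checking via Witt in the plane that equality of classes modulo $C$ is equivalent to $N_{\SO(q)}(\ZZ/n)$-conjugacy — these together yield the bijection with $D(\diag{1,-\beta_n})/C$.
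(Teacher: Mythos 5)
Your part-by-part arguments are correct in substance and take a genuinely different route from the paper. The paper runs everything through the cohomological construction it recalls from \cite{Beau10}: for each subgroup $H$ it computes the centralizer $Z$ and the normalizer $N$ of $H$ in $\SO(q)(k_s)$ (Lemma \ref{Lemma_Centralizer}), identifies the conjugacy classes with $H^1(k,Z)_0/N$, where $H^1(k,Z)_0 = \Ker(H^1(k,Z)\to H^1(k,\SO(q)))$, and evaluates this kernel and the $N$-action by explicit cocycle computations (Lemma \ref{Lemma_Cohom_Orthogonal}). You instead argue directly with the geometry of the quadratic space: involutions correspond to anisotropic axes and Witt extension gives (a); commuting involutions have orthogonal axes, so copies of $(\ZZ/2\ZZ)^2$ are unordered orthogonal frames, giving (b) (your reordering check does reproduce the stated $S_3$-action, and identifying $\diag{-ax,-by,abxy}\simeq q$ with $\quat{ax}{by}\cong\quat{a}{b}$ is the standard pure-norm-form criterion); Witt plus uniqueness of the order-$n$ subgroup of the $k$-points of a one-dimensional torus gives (c); Schur/Noether--Deuring descent plus uniqueness of the invariant form gives (e); and for (d) the reflection-axis class modulo the half-step ratio is the right invariant. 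On (d), which you leave as a sketch, the steps you list do close up: the ratio of adjacent axis classes is $2(\alpha_n+1)\equiv\frac{\alpha_n+1}{2}$; this class lies in $D(\diag{1,-\beta_n})$ either by the norm identity $\bigl(\frac{\alpha_n+1}{2}\bigr)^2-\beta_n\bigl(\frac12\bigr)^2=\frac{\alpha_n+1}{2}$ or, for free, because both axis classes lie in the coset $D(q|_{\ell^\perp})=-\gamma\cdot D(\diag{1,-\beta_n})$; and Witt in the plane gives the conjugacy criterion. Your approach buys elementarity (no nonabelian $H^1$ anywhere) at the price of case-by-case geometry, while the paper's method is uniform and produces explicit twisted representatives. (One slip: $\frac{\alpha_n+1}{2}$ is always a square when $n$ is odd, but it can also be a square for even $n$, so ``trivial exactly when $n$ is odd'' is too strong.)

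However, your stated unifying device is wrong, and it is fortunate that none of your five arguments actually invokes it. For $N$ the normalizer of $G_0$, the kernel of $H^1(k,N)\to H^1(k,\SO(q))$ is in bijection with the $\SO(q)(k)$-orbits on $(\SO(q)/N)(k)$, and the $k$-points of $\SO(q)/N$ are the \emph{Galois-stable} subgroups of $\SO(q)(k_s)$ conjugate to $G_0$ --- not the subgroups of $\SO(q)(k)$. These genuinely differ: take $k=\mathbb{R}$, $q=\diag{-1,-1,1}$, so $\SO(q)\cong\PGL_2$, and $G_0=\ZZ/n\ZZ$ ($n\geq 3$) generated by a rotation. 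The subgroup of $\PGL_2(\mathbb{C})$ generated by the class of $\diagm{\omega_n,1}$ is Galois-stable (complex conjugation inverts the generator) and is $\mathbb{C}$-conjugate to $G_0$, but it is not contained in $\PGL_2(\mathbb{R})$ and cannot be $\PGL_2(\mathbb{R})$-conjugate to any constant subgroup; so your kernel has at least two elements, while part (c) asserts that all constant $\ZZ/n\ZZ$'s form a single class. This is precisely why \cite{Beau10} and the paper place the cohomology on the \emph{centralizer} $Z$ --- the kernel $H^1(k,Z)_0$ classifies embeddings of $H$ into $\SO(q)(k)$ up to rational conjugacy --- and use the normalizer only as a group acting on that kernel, with conjugacy classes of subgroups given by $H^1(k,Z)_0/N$. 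If you want a cohomological umbrella for your write-up, replace your device by that one; otherwise delete it, since your direct proofs stand on their own.
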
 

\smallskip
\noindent{\bf Acknowledgements.}
I thank Z. Reichstein for reading early versions of this paper and making several useful suggestions. I am also grateful to an anonymous referee for many helpful remarks.

\section{Finite subgroups of $\SO(q)$} \label{Sect_FiniteSubg}

\begin{lemma} \label{Lemma_Matrices_SO}
Let $q$ be a nondegenerate ternary quadratic form over $k$ and let $M$ be an element of $\SO(q)(k)$. Then the following results hold.
\begin{enumerate}[\upshape(a)]
\item Suppose that $M$ is diagonalizable over $\ol{k}$. Then its eigenvalues are $1$, $\lambda$, and $\lambda^{-1}$ for some $\lambda \in \ol{k}^{\times}$. If $\lambda \neq \pm 1$, then $q$ becomes isotropic over $k(\lambda)$, which is an extension of $k$ of order dividing $2$.
\item Suppose that $M$ is a nontrivial unipotent matrix. Then $q$ must be isotropic.
\end{enumerate}
\end{lemma}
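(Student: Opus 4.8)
The plan is to work throughout with the symmetric bilinear form $B$ polarizing $q$, normalized so that $B(x,x) = 2q(x)$; since $\mathrm{char}(k) \neq 2$, a vector $w$ is isotropic for $q$ precisely when $B(w,w) = 0$, and the nondegeneracy of $q$ is the nondegeneracy of $B$. The hypothesis $M \in \SO(q)(k)$ means exactly that $B(Mx,My) = B(x,y)$ for all $x,y$ and that $\det M = 1$. For part (a) I would first record the standard reciprocity of eigenvalues: if $Mv = \lambda v$ and $Mw = \mu w$ over $\ol{k}$, then $\lambda\mu\, B(v,w) = B(Mv,Mw) = B(v,w)$, so $B(v,w) = 0$ whenever $\lambda\mu \neq 1$. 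Hence if some eigenvalue $\lambda$ had no reciprocal partner $\lambda^{-1}$ in the spectrum, its eigenvector would be orthogonal to every eigenvector and thus (as $M$ is diagonalizable, so eigenvectors span $\ol{k}^3$) lie in the radical of $B$, contradicting nondegeneracy. Therefore the eigenvalue multiset is stable under $x \mapsto x^{-1}$; being a three-element multiset it contains a self-inverse element, necessarily $\pm 1$, and combining this with $\det M = 1$ (the product of the eigenvalues) forces the spectrum to be exactly $1,\lambda,\lambda^{-1}$.

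Assuming now $\lambda \neq \pm 1$, the characteristic polynomial of $M$ factors over $k$ as $(t-1)\bigl(t^2 - (\lambda + \lambda^{-1})t + 1\bigr)$, so $\lambda + \lambda^{-1} \in k$ and $\lambda$ satisfies a quadratic over $k$; thus $[k(\lambda):k]$ divides $2$. Over $k(\lambda)$ the matrix $M - \lambda I$ is singular with entries in $k(\lambda)$, hence admits a nonzero kernel vector $v \in k(\lambda)^3$. From $\lambda^2 B(v,v) = B(Mv,Mv) = B(v,v)$ and $\lambda^2 \neq 1$ we get $B(v,v) = 0$, i.e.\ $q(v) = 0$, so $q$ is isotropic over $k(\lambda)$. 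This finishes (a).

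For part (b) I would set $N = M - I$, which is nilpotent by unipotence and nonzero by nontriviality. Expanding the isometry identity $B(Mx,My) = B(x,y)$ gives $B(Nx,y) + B(x,Ny) + B(Nx,Ny) = 0$ for all $x,y$. Taking $y \in \Ker N$ annihilates the last two terms, yielding $B(Nx,y) = 0$, so $\operatorname{im} N \subseteq (\Ker N)^{\perp}$; by nondegeneracy $\dim(\Ker N)^{\perp} = 3 - \dim \Ker N = \rank N = \dim \operatorname{im} N$, whence $\operatorname{im} N = (\Ker N)^{\perp}$. The decisive observation is purely linear-algebraic: a nonzero nilpotent $N$ always has $\operatorname{im} N \cap \Ker N \neq 0$, since $z = N^{r-1}u \neq 0$ lies in both when $r \geq 2$ is the nilpotency index. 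Choosing such a $w \neq 0$, which may be taken in $k^3$ because $N$ is defined over $k$, we have $w \in \operatorname{im} N = (\Ker N)^{\perp}$ and $w \in \Ker N$ simultaneously, so $B(w,w) = 0$ and $q(w) = 0$; thus $q$ is isotropic over $k$.

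The arguments are short once the polar form is in place, and the step I would be most careful with is the eigenvalue bookkeeping in (a): I must make sure that reciprocity together with $\det M = 1$ in the odd dimension $3$ genuinely pins the spectrum down as $1,\lambda,\lambda^{-1}$, with $1$ always occurring and no stray sign surviving. In contrast, the identity $\operatorname{im} N \cap \Ker N \neq 0$ lets me treat (b) uniformly and avoid any Jordan-type case analysis, which is why I prefer it to diagonalizing $N$.
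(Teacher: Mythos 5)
Your proposal is correct, but it reaches the two conclusions by genuinely different mechanisms than the paper. For part (a), the paper exploits the matrix identity $M^{-1} = Q^{-1}\,{}^T\!M\,Q$ to get the functional equation $P(x) = -x^3P(1/x)$ for the characteristic polynomial; this yields inversion-stability of the spectrum \emph{with multiplicities} at once, and evaluating at $x=1$ gives $P(1) = -P(1)$, so $1$ is an eigenvalue for free (char $\neq 2$). Your route via the pairing identity $\lambda\mu\,B(v,w) = B(v,w)$ is sound, but note that as written it only proves \emph{set}-stability of the spectrum under inversion (to get multiset-stability you would observe that $B$ restricts to a perfect pairing $E_\lambda \times E_{\lambda^{-1}} \to \ol{k}$, forcing $\dim E_\lambda = \dim E_{\lambda^{-1}}$); set-stability plus $\det M = 1$ does pin down the spectrum as $1, \lambda, \lambda^{-1}$, but only after the small case analysis you allude to (e.g.\ ruling out spectra containing $-1$ but not $1$). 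So your bookkeeping worry is warranted, and the paper's functional equation is the slicker way to bypass it. The final isotropy computation over $k(\lambda)$ is identical in both. For part (b), the two arguments diverge more: the paper conjugates $M$ into Jordan form to produce $v_1, v_2$ with $Mv_1 = v_1$, $Mv_2 = v_1 + v_2$ and then makes one bilinear-form computation, whereas you work with $N = M - I$ and combine $\mathrm{im}\,N \subseteq (\Ker N)^{\perp}$ with the general fact that $\mathrm{im}\,N \cap \Ker N \neq 0$ for a nonzero nilpotent. Your version buys two things: it avoids Jordan canonical form entirely (so rationality of the vectors over $k$ is transparent rather than implicit), and it works verbatim for a nondegenerate form of any dimension, not just $n = 3$; you also do not actually need the equality $\mathrm{im}\,N = (\Ker N)^{\perp}$, since the inclusion already gives $B(w,w) = 0$ for $w \in \mathrm{im}\,N \cap \Ker N$.
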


\begin{proof}
Let $Q$ be the matrix associated to $q$. Note that $M^{-1} = Q^{-1} \,{}^T\!M\, Q$, whence the characteristic polynomial $P$ of $M$ satisfies $P(x) = -x^3 P(1/x)$. It follows easily that the eigenvalues of $M$ must be $1, \lambda, \lambda^{-1}$ for some $\lambda \in \ol{k}$. 

Suppose first that $M$ is diagonalizable. Taking the trace of $M$, we obtain that $\lambda+\lambda^{-1} \in k$, whence $[k(\lambda):k]$ is $1$ or $2$. We show that $q$ becomes isotropic over $k(\lambda)$, provided that $\lambda\neq \pm 1$. Indeed, over this field we can select an eigenvector $v$ of $M$ associated to the eigenvalue $\lambda$. Then, we compute $q(v) = q(Mv) = \lambda^2 q(v)$, whence $q(v)=0$. This completes the proof of part (a).

Suppose now that $M$ is a nontrivial unipotent matrix. Then we can find nonzero vectors $v_1, v_2$ such that $Mv_1 = v_1$ and $M v_2 = v_1 + v_2$ (this follows easily after conjugating $M$ into Jordan canonical form). Let $b_q$ be the symmetric bilinear form associated to $q$. Note that $b_q(v_1,v_2) = b_q(M v_1, M v_2) = q(v_1) + b_q(v_1,v_2)$, whence $q(v_1) = 0$. This finishes the proof.
%
\end{proof}

\begin{proof} [Proof of Theorem \ref{Theorem_p_irregular}]
Let $G$ be a $p$-irregular subgroup inside $\SO(q)$ and let $M \in G$ be any element of order $p$. Note that $0 = M^p - I = (M - I)^p$, whence $M$ is a unipotent matrix. By Lemma \ref{Lemma_Matrices_SO}(b), it follows that $q$ is isotropic, so we obtain that $\SO(q) \cong \PGL_2$.
\end{proof}

\begin{proof} [Proof of Theorem \ref{Theorem_Groups}]
(a) The first statement is trivial: the diagonal subgroup $D_0 \subset \SO(q)$ is isomorphic to $(\ZZ/2\ZZ)^2$. Assume henceforth that $n \geq 3$. It suffices to prove that $\ZZ/n\ZZ \subset \SO(q)$ $\Rightarrow$ $\alpha_n \in k$ and $q$ represents $-\beta_n$ $\Rightarrow$ $D_{2n} \subset \SO(q)$.

To prove the first implication, let $M$ be an element of order $n$ in $\SO(q)(k)$. By Lemma \ref{Lemma_Matrices_SO}(a), we may assume that the eigenvalues of $M$ are $1$, 
$\omega_n$ and $\omega_n^{-1}$, after replacing $M$ by a power of itself if necessary. Using Lemma \ref{Lemma_Matrices_SO}(a) again, we see that $\alpha_n \in k$ and $q$ becomes isotropic over $k(\omega_n) = k(\sqrt{\beta_n})$. It suffices to prove that $q$ is isotropic over $k(\sqrt{\beta_n})$ if and only if $q$ represents $-\beta_n$. If $q$ is isotropic over $k$, then $q$ is universal, so there is nothing to prove. Suppose that $q$ is anisotropic over $k$. It follows from \cite[Prop. 34.8]{EKM08} that $q \simeq q_0 \otimes \mathrm{N}_{k(\sqrt{\beta_n})/k} \perp q_1$ for some nondegenerate quadratic forms $q_0, q_1$, where $q_1$ is anisotropic over $k(\sqrt{\beta_n})$. If $q$ is isotropic over $k(\sqrt{\beta_n})$, we conclude that $q \neq q_1$ and thus $\dim(q_0)=\dim(q_1)=1$ (recall that $\mathrm{N}_{k(\sqrt{\beta_n})/k} \simeq \diag{1,-\beta_n}$). It follows that $q_1 \cong \diag{-\beta_n}$ by taking discriminants, whence $q$ represents $-\beta_n$. Conversely, suppose that the latter holds. Then we must have that $q \simeq \diag{-\beta_n,-\gamma,\beta_n\gamma}$ for some $\gamma \in k^\times$, so it follows that $q$ is isotropic over $k(\sqrt{\beta_n})$.

Suppose next that $\alpha_n \in k$ and $q$ represents $-\beta_n$. As we saw above, we may assume that $q = \diag{-\beta_n,-\gamma,\beta_n\gamma}$ for some $\gamma \in k^\times$. The matrices
$$
s = \left(\begin{array}{ccc}
1 & 0 & 0\\
0 & \alpha_n & \beta_n \\
0 & 1 & \alpha_n \\
\end{array}\right), \
t = \left(\begin{array}{ccc}
-1 & 0 & 0\\
0 & 1 & 0 \\
0 & 0 & -1 \\
\end{array}\right),
$$
are contained in $\SO(q)(k)$ and satisfy $s^n = t^2 = (st)^2 = 1$, whence they generate a subgroup isomorphic to $D_{2n}$. The proof of this part is complete.

(b) It clearly suffices to prove that $A_4 \subset \SO(q) \Rightarrow q \simeq q_0 \Rightarrow S_4 \subset \SO(q)$. Recall that $A_4$ acts linearly on $k^3$ by rotations on the tetrahedron with vertices $(\epsilon_1, \epsilon_2, \epsilon_1\epsilon_2)$, where $\epsilon_i = \pm 1$. This representation is absolutely irreducible and leaves $q_0$ invariant, i.e., we have a linear representation $\rho\colon A_4 \hookrightarrow \SO(q_0)(k)$. Recall that any absolutely irreducible representation of a finite group admits at most one invariant quadratic form, up to a scalar. Indeed, after passing to the algebraic closure, this is an immediate consequence of Schur's lemma. It follows that $q \simeq c \cdot q_0$ for some $c \in k^\times$; taking discriminants, we conclude that $c = 1$.

On the other hand, $S_4$ acts linearly on $k^3$ by rotations on the cube with vertices $(\pm 1, \pm 1, \pm 1)$ and the corresponding representation has trivial determinant. The form $q_0$ is invariant under this action and therefore $S_4$ embeds into $\SO(q_0)(k)$. If $q \simeq q_0$, then $\SO(q) \cong \SO(q_0)$ and the result follows.

(c) Note that $A_5 \subset \SO(q) \Rightarrow A_4 \subset \SO(q) \Rightarrow q \simeq q_0$ by part (b). Since $A_5$ contains elements of order $5$, it is necessary that $\omega_5 + \omega_5^{-1} \in k$, which happens if and only if $\sqrt{5} \in k$.

Conversely, if $\sqrt{5} \in k$, the group $A_5$ acts linearly on $k^3$ by rotations on the icosahedron with vertices $(\pm \varphi, \pm 1, 0), (0, \pm \varphi, \pm 1), (\pm 1, 0, \pm \varphi)$, where $\varphi = (1+\sqrt{5})/2$, and this action preserves $q_0$. The result readily follows.
\end{proof}

\begin{remark}
The conclusion in part (a) of the above theorem is independent of the choice of $\omega_n$. Indeed, an easy exercise on Chebyshev polynomials shows that $(\omega_n + \omega_n^{-1})/2 \in k$ and $q$ represents $1 - (\omega_n + \omega_n^{-1})^2/4 = -(\omega_n - \omega_n^{-1})^2/4$ if and only if the same holds for {\em every} $n$-th root of $1$. 
\end{remark}

\begin{example}
Let $k = \mathbb{Q}$. A primitive $n$-th root of $1$ is given by $\omega_n = \exp(2\pi i/n)$. Recall that $\omega_n + \omega_n^{-1} \in \mathbb{Q}$ if and only if $n = 1, 2, 3, 4$ or $6$. The group $\SO(q)$ contains $\ZZ/4\ZZ$ and $D_{8}$ if and only if $q$ represents $1$. Moreover, $\SO(q)$ contains $\ZZ/3\ZZ$ and $D_{6}$ if and only if it contains $\ZZ/6\ZZ$ and $D_{12}$ if and only if $q$ represents $3$. 
\end{example}

\section{Conjugacy classes of subgroups} \label{Sect_Conj}

The purpose of this section is to prove Theorem \ref{Theorem_Conjugacy}. We recall the following construction for convenience.

\begin{constr} (\cite[\S 2]{Beau10})
Let $G$ be an algebraic group defined over $k$ and let $H \subset G(k)$ be a subgroup. Fix a separable closure $k_s$ of $k$ and set $\Gamma = {\rm Gal}(k_s/k)$. Define the pointed set ${\rm Emb}_i(H,G(k))$ as the set of embeddings $H \hookrightarrow G(k)$ which are conjugate by an element of $G(k_s)$ to the natural inclusion $i\colon H \hookrightarrow G(k)$, modulo conjugacy by an element of $G(k)$. Also, define the pointed set ${\rm Conj}(H, G(k))$ consisting of subgroups of $G(k)$ which are conjugate to $H$ over $G(k_s)$, modulo conjugacy by an element of $G(k)$. 

The centralizer of $H$ in $G$, which we denote by $Z$, will be a closed subgroup of $G$ defined over $k$ (cf. \cite[Ch. 1, \S 1.7]{Bo91}). The kernel $H^1(k,Z)_0$ of the natural map $H^1(k,Z) \to H^1(k,G)$ is isomorphic to ${\rm Emb}_i(H,G(k))$ as pointed sets. The normalizer $N$ of $H$ in $G(k_s)$ acts on $1$-cocycles $\Gamma \to Z(k_s)$ in the following way: an element $n \in N$ sends $\sigma \mapsto a_\sigma$ to $\sigma \mapsto n^{-1} a_{\sigma} \sigma(n)$. This (right) action descends to $H^1(k,Z)$ and preserves $H^1(k,Z)_0$. Then there is an isomorphism of pointed sets between $H^1(k,Z)_0/N$ and ${\rm Conj}(H, G(k))$.
\end{constr}

Now recall that any two isomorphic finite subgroups (of order prime to $\textrm{char}(k)$) of $\SO(q)(k_s) \cong \PGL_2(k_s)$ are conjugate. Therefore, the conjugacy classes of finite subgroups of $\SO(q)$ of the same isomorphism type as some particular subgroup $H \subset \SO(q)$ are in natural bijective correspondence with ${\rm Conj}(H, \SO(q)(k))$, independently of the choice of $H$.

We now state some basic facts about the structure of $\SO(q)$. The proofs are easy and are left to the reader. In the sequel, we write $\diagm{a_1, \ldots, a_n}$ for the diagonal matrix with entries $a_1, \ldots, a_n$ along the diagonal.

\begin{lemma} \label{Lemma_Centralizer}
Let $q = \diag{-a,-b,ab}$ be a nondegenerate quadratic form. If $H$ is a finite subgroup of $\SO(q)$, let $Z$ be the centralizer of $H$ in $\SO(q)$ and let $N$ be the normalizer of $H$ in $\SO(q)(k_s)$.

\begin{enumerate}[\upshape(a)]
\item Let $H \cong \ZZ/2\ZZ$ be generated by the diagonal matrix $\mathrm{diag}(1,-1,-1)$. Then we have that
$$
Z = \left\{
\left(\begin{array}{cc}
\det M & 0\\
0 &  M\\
\end{array}\right):\ M \in \Ort(\diag{-b,ab})
\right\} \cong \Ort(\diag{-b,ab})
$$
and $N = Z(k_s)$.

\item Let $H \cong (\ZZ/2\ZZ)^2$ be the diagonal subgroup inside $\SO(q)$. Then we have that $Z=H$ and $N$ is isomorphic to $S_4$. Explicitly, if we set $u = \sqrt{-a}$ and $v = \sqrt{-b}$, the matrices
$$
\left(\begin{array}{ccc}
0 & vu^{-1} & 0\\
0 & 0 & u \\
v^{-1} & 0 & 0 \\
\end{array}\right), \
\left(\begin{array}{ccc}
-1 & 0 & 0\\
0 & 0 & -u \\
0 & -u^{-1} & 0 \\
\end{array}\right),
$$
generate a subgroup $N' \subset \SO(q)(k_s)$ isomorphic to $S_3$ and $N = H \rtimes N'$.
\item Let $n \geq 3$ and suppose that $H \cong \ZZ/n\ZZ$ is contained in $\SO(q)$. Using the same notation from Theorem \ref{Theorem_Groups}, we may assume that $q = \diag{-\beta_n,-\gamma,\beta_n \gamma}$ and $H$ is generated by the matrix
$$
s = \left(\begin{array}{ccc}
1 & 0 & 0\\
0 & \alpha_n & \beta_n \\
0 & 1 & \alpha_n \\
\end{array}\right).
$$
Then we have that
$$
Z = \left\{
\left(\begin{array}{cc}
1 & 0\\
0 & M\\
\end{array}\right):\ M \in \SO(\diag{-\gamma,\beta_n\gamma})
\right\} \cong \SO(\diag{-\gamma,\beta_n\gamma}).
$$
\item Let $n \geq 3$ and suppose that $H \cong D_{2n}$ is contained in $\SO(q)$. As before, assume that $q = \diag{-\beta_n,-\gamma,\beta_n \gamma}$ and $H$ is generated by the matrices
$$
s = \left(\begin{array}{ccc}
1 & 0 & 0\\
0 & \alpha_n & \beta_n \\
0 & 1 & \alpha_n \\
\end{array}\right), \
t = \left(\begin{array}{ccc}
-1 & 0 & 0\\
0 & 1 & 0 \\
0 & 0 & -1 \\
\end{array}\right).
$$
Then we have that $Z \cong \ZZ/2\ZZ$ is generated by $\diagm{1,-1,-1}$. Assume without loss of generality that $\omega_{2n} \in k_s$ satisfies $\omega_{2n}^2 = \omega_n$. Then the matrices 
$$
\ol{s} = \left(\begin{array}{ccc}
1 & 0 & 0\\
0 & \alpha_{2n} & 2\alpha_{2n}\beta_{2n} \\
0 & (2\alpha_{2n})^{-1} & \alpha_{2n} \\
\end{array}\right), \
t = \left(\begin{array}{ccc}
-1 & 0 & 0\\
0 & 1 & 0 \\
0 & 0 & -1 \\
\end{array}\right),
$$
generate $N \cong D_{4n}$ inside $\SO(q)(k_s)$.
\item Let $H = A_4,\ S_4$ or $A_5$ and suppose that $H$ is contained in $\SO(q)$. Then the centralizer $Z$ is trivial. 
\end{enumerate}
\qed
\end{lemma}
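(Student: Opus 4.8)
The plan is to handle the centralizers by linear algebra over $\ol{k}$ and the normalizers through the conjugation action on $H$. Throughout I would use that all the orthogonal groups in sight are smooth, so the group-scheme centralizer $Z$ satisfies $Z(k_s) = \{g \in \SO(q)(k_s) : g \text{ commutes with } H\}$, and similarly over $\ol{k}$; this reduces everything to explicit matrix computations. The recurring \emph{centralizer principle} is that any element commuting with a fixed semisimple matrix preserves each of its eigenspaces, so commutation forces a block shape, after which the two defining conditions of $\SO(q)$ (preserving $q$ and having determinant $1$) pin down the remaining entries. The recurring \emph{normalizer principle} is the exact sequence analysis of $N \to \Aut(H)$ whose kernel is $Z(k_s)$.

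For the centralizers: in (a), commuting with $\diagm{1,-1,-1}$ (eigenvalues $1,-1,-1$) forces the off-diagonal blocks to vanish, giving $M = \left(\begin{smallmatrix} m & 0 \\ 0 & M'\end{smallmatrix}\right)$; orthogonality gives $m = \pm 1$ and $M' \in \Ort(\diag{-b,ab})$, and $\det M = 1$ forces $m = \det M'$, which is the asserted isomorphism. Part (b) is the simultaneous version: commuting with two independent diagonal involutions forces $M$ diagonal, and a diagonal element of $\SO(q)$ has entries $\pm 1$ of product $1$, hence lies in $H$, so $Z = H$. In (c), for $n \geq 3$ the matrix $s$ has three distinct eigenvalues $1, \omega_n, \omega_n^{-1}$ over $\ol{k}$, so any $M$ commuting with $s$ preserves the coordinate line $ke_1$ and the plane $V = ke_2 \oplus ke_3$; on $V$ it commutes with the order-$n$ rotation $s|_V$, and since a reflection would invert a nontrivial rotation, $M|_V \in \SO(V)$; the determinant condition then forces the $e_1$-entry to be $1$, giving $Z \cong \SO(\diag{-\gamma,\beta_n\gamma})$. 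Part (d) intersects this with the centralizer of $t$, which acts on $V$ as a reflection, so the only admissible rotations are $\pm I$, whence $Z = \{I, \diagm{1,-1,-1}\} \cong \ZZ/2\ZZ$. Finally (e) is immediate from Schur's lemma: the relevant $3$-dimensional representations of $A_4, S_4, A_5$ are absolutely irreducible, so the centralizer in $\GL_3(\ol{k})$ is the scalars, and a scalar $cI$ lies in $\SO(q)$ only if $c^2 = c^3 = 1$, i.e. $c = 1$.

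For the normalizers: in (a), $\Aut(\ZZ/2\ZZ)$ is trivial, so $N$ is the centralizer over $k_s$, namely $Z(k_s)$. In (b), $\Aut((\ZZ/2\ZZ)^2) \cong S_3$ and the kernel of $N \to \Aut(H)$ is $Z(k_s) = H$; I would check directly that the two displayed matrices lie in $\SO(q)(k_s)$ (this is where $u = \sqrt{-a}$ and $v = \sqrt{-b}$ enter, since a bare permutation of axes does not preserve $\diag{-a,-b,ab}$ and one rescales by these square roots), that they induce a $3$-cycle and a transposition of the three coordinate involutions, and that they generate a complement $N' \cong S_3$; combined with $|H|\cdot|S_3| = 24$ this yields $N = H \rtimes N' \cong S_4$. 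In (d), the key observation is that for $n \geq 3$ the rotation subgroup $\langle s\rangle \cong \ZZ/n\ZZ$ is characteristic in $D_{2n}$ (its elements of order $>2$ generate it), so $N$ normalizes $\langle s\rangle$ and hence lies in the normalizer of $\langle s\rangle$ in $\SO(q)(k_s)$. That larger normalizer preserves $ke_1$ and $V$ and acts on $V$ through all of $\Ort(V)$; requiring in addition that $t$ map to a reflection belonging to $H$ restricts the rotation part to $\langle \ol{s}\rangle$, the cyclic group of order $2n$ generated by the displayed element. A direct check that $\ol{s}^2 = s$ and $t\,\ol{s}\,t^{-1} = \ol{s}^{-1}$ then identifies $N = \langle \ol{s}, t\rangle \cong D_{4n}$.

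The centralizers and the normalizers in (a) and (e) are routine; the genuine work is concentrated in the normalizers (b) and (d). The main obstacle is verificatory rather than conceptual: one must produce \emph{explicit} matrices realizing the outer automorphisms and confirm they lie in $\SO(q)(k_s)$, not merely in the abstract orthogonal group over $\ol{k}$. Since permuting axes fails to preserve the unequal entries $-a,-b,ab$ in (b) and a naive order-$2n$ rotation must be matched to $\diag{-\gamma,\beta_n\gamma}$ in (d), the correct normalizing elements require the square-root scalings $u, v, \alpha_{2n}, \beta_{2n}$; one must check both that these candidates preserve $q$ with determinant $1$ and that they satisfy the defining relations of $S_3$ (resp. $D_{4n}$) while inducing the stated action on $H$. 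Once these explicit checks are in place, the identifications $N \cong S_4$ and $N \cong D_{4n}$ follow from the kernel-and-order count above.
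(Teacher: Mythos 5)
Your proposal is correct, and in fact the paper offers no proof to compare against: Lemma \ref{Lemma_Centralizer} is stated with its proofs ``left to the reader,'' and your eigenspace/block-decomposition argument for the centralizers together with the kernel-and-order analysis of $N \to \Aut(H)$ (including the key checks that the displayed matrices lie in $\SO(q)(k_s)$, that $\ol{s}^2 = s$, and that $\langle s\rangle$ is characteristic in $D_{2n}$ for $n \geq 3$) is exactly the routine verification the author intends. No gaps.
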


\begin{remark}
Since any two finite isomorphic subgroups $H_1$ and $H_2$ of $\SO(q)$ are conjugate over $k_s$, their centralizers will also be conjugate over $k_s$ (in particular, they must be isomorphic). However, they are not necessarily isomorphic over $k$. For a concrete example, take $k = \mathbb{R}$, $q=\diag{-1,-1,1}$, $H_1$ generated by $\diagm{1,-1,-1}$ and $H_2$ generated by $\diagm{-1,-1,1}$. A simple computation shows that $Z(H_1)\cong \Ort(\diag{-1,1})$ and $Z(H_2) \cong \Ort(\diag{-1,-1})$. These groups are not isomorphic; the identity component $Z(H_1)^\circ$ is isomorphic to $\mathbb{G}_m$ while $Z(H_2)^\circ \cong \SO_2$, which is a non-split torus over $\mathbb{R}$. 
\end{remark}
\begin{remark}
Suppose we are in the situation of Lemma \ref{Lemma_Centralizer}(c) with $\gamma = 1$. Then, $q = \diag{-\beta_n,-1,\beta_n} \simeq \diag{-1,-1,1}$ and $\SO(q) \cong \PGL_2$, so we are dealing with the case studied in \cite{Beau10}. Any two cyclic subgroups of order $n$ inside $\SO(q)$ are conjugate over $k$ (see Theorem \ref{Theorem_Conjugacy}), so the centralizer of such a subgroup is unique up to conjugacy. By Lemma \ref{Lemma_Centralizer}(c), it must be isomorphic to $\SO(\diag{-1,\beta_n})$, which is a split torus if and only if $\beta_n = \frac{1}{4}(\omega_n - \omega_n^{-1})^2 \in k^{\times 2}$ if and only if $\omega_n \in k$ (since $\omega_n+\omega_n^{-1} \in k$). So in general the centralizer is not isomorphic to the split torus $\mathbb{G}_m$, contrary to an assertion made in the proof of \cite[Thm. 4.2]{Beau10} and it might have nontrivial cohomology. However, the final result in \cite{Beau10} is unaffected since the map $H^1(k,Z) \to H^1(k,G)$ still has trivial kernel (see Theorem \ref{Theorem_Conjugacy}).
\end{remark}

We now recall some facts about the Galois cohomology of orthogonal groups of quadratic spaces (see \cite[\S 29.E]{KMRT98} for details). Let $q$ be any nondegenerate quadratic form of dimension $n$ defined over $k$. The cohomology set $H^1(k,\Ort(q))$ classifies isometry classes of $n$-dimensional nondegenerate quadratic forms over $k$, while $H^1(k,\SO(q))$ classifies isometry classes of $n$-dimensional quadratic forms $q'$ over $k$ such that $\mathrm{disc}(q') = \mathrm{disc}(q)$. The natural map $H^1(k,\SO(q)) \to H^1(k,\Ort(q))$ is injective. Let $D_0 \cong (\ZZ/2\ZZ)^{n-1}$ and $D \cong (\ZZ/2\ZZ)^n$ be the subgroups of diagonal matrices inside $\SO(q)$ and $\Ort(q)$, respectively. We have a commutative diagram
$$
\xymatrix{
1 \ar[r] & D_0 \ar[d]_-{i} \ar@{^{ (}->}[r] & D \ar[d]_-{j} \ar[r]^-{\det} & \ZZ/2\ZZ  \ar[r] & 1\\
&\SO(q) \ar@{^{ (}->}[r] & \Ort(q) 
}
$$
where the top row is exact. This induces a diagram on cohomology
$$
\xymatrix{
1 \ar[r] & H^1(k,D_0) \ar[d]_-{i_*} \ar[r] & (k^\times/k^{\times 2})^{n} \ar[d]_-{j_*} \ar[r]^{p} & k^\times/k^{\times 2} \ar[r] & 1 \\
&H^1(k,\SO(q)) \ar[r] & H^1(k,\Ort(q))
}
$$
where $p\colon (c_1, \ldots, c_n) \mapsto c_1 \ldots c_n$ is the product map. This identifies $H^1(k,D_0)$ with the elements $(c_1, \ldots, c_n) \in (k^\times/k^{\times 2})^{n}$ such that $c_1 \ldots c_n = 1\! \mod k^{\times 2}$. 

In what follows, we will abuse notation and refer to quadratic forms as elements of the cohomology sets $H^1(k,\SO(q))$ and $H^1(k,\Ort(q))$. The reader should bear in mind that we are tacitly referring to their isometry classes.
 
\begin{lemma} \label{Lemma_Cohom_Orthogonal}
Suppose that $q \simeq \diag{b_1, \ldots, b_n}$ is a nondegenerate quadratic form.  
\begin{enumerate}[\upshape(a)] 
\item The map $j_*$ takes $(c_1, \ldots, c_n)$ to $\diag{c_1 b_1, \ldots, c_n b_n}$ and consequently, $i_*$ takes $(c_1, \ldots, c_n)$, with $c_1 \ldots c_n = 1\! \mod k^{\times 2}$, to $\diag{c_1 b_1, \ldots, c_n b_n}$.
\item Let $q' \simeq \diag{d} \perp q$ and define $f\colon \SO(q) \to \SO(q')$ by sending
$$
M \mapsto 
\left(\begin{array}{cc}
1 & 0\\
0 & M\\
\end{array}\right).
$$
The induced map $f_*\colon H^1(k,\SO(q)) \to H^1(k,\SO(q'))$ sends
$$
q'' \mapsto \diag{d} \perp q''.
$$
\item Let $q' \simeq \diag{d} \perp q$ and define $f\colon \Ort(q) \to \SO(q')$ by sending
$$
M \mapsto 
\left(\begin{array}{cc}
\det M & 0\\
0 & M\\
\end{array}\right).
$$
The induced map $f_*\colon H^1(k,\Ort(q)) \to H^1(k,\SO(q'))$ sends
$$
q'' \mapsto \diag{\mathrm{disc}(q'') \mathrm{disc}(q')} \perp q''.
$$
\end{enumerate}
\end{lemma}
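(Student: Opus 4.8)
The plan is to read all three maps through the standard twisting dictionary for the nonabelian cohomology of orthogonal groups. Recall that if $G \subseteq \Ort(V,q)$ is a $k$-subgroup acting on the quadratic space $(V,q)$, then a cocycle $a \in Z^1(k,G)$ produces a twisted quadratic space ${}_a(V,q)$ whose isometry class is precisely the image of $[a]$ under $H^1(k,G) \to H^1(k,\Ort(q))$, and this construction is compatible with orthogonal decompositions $V = V_1 \perp V_2$ respected by the cocycle, i.e. ${}_a(V,q) \simeq {}_a(V_1,q|_{V_1}) \perp {}_a(V_2,q|_{V_2})$. Granting this dictionary, each part reduces to identifying a twisted form in coordinates.

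For part (a) I would reduce to dimension one. In the diagonal basis $D = \prod_{i=1}^n \Ort(\diag{b_i})$ with each factor $\Ort(\diag{b_i}) = \{\pm 1\} \cong \ZZ/2\ZZ$, and under the Kummer isomorphism $H^1(k,\ZZ/2\ZZ) \cong k^\times/k^{\times 2}$ a square class $c_i$ corresponds to the cocycle $\sigma \mapsto \sigma(\sqrt{c_i})/\sqrt{c_i}$. Twisting the line $(k e_i, \diag{b_i})$ by this cocycle replaces the generator $e_i$ by $\sqrt{c_i}\,e_i$, so the twisted line carries the form $\diag{c_i b_i}$; compatibility with orthogonal sums then gives $j_*(c_1,\ldots,c_n) = \diag{c_1 b_1,\ldots,c_n b_n}$. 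Since the square relating $D_0\hookrightarrow D$ and $\SO(q)\hookrightarrow\Ort(q)$ commutes, $i_*$ is the restriction of $j_*$ to the tuples with $c_1\cdots c_n = 1 \bmod k^{\times 2}$, which yields the same formula and lands in $H^1(k,\SO(q))$ because the resulting form then has discriminant $\disc(q)$.

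For part (b), the pushed-forward cocycle $\sigma\mapsto\diag{1}\oplus a_\sigma$ fixes the first basis vector, so twisting $q' = \diag{d}\perp q$ leaves the summand $\diag{d}$ untouched and twists the complementary $q$ to $q''$; the orthogonal-sum compatibility gives $f_*(q'') = \diag{d}\perp q''$. Part (c) treats the $q$-summand identically, but the first coordinate is now scaled by $\det a_\sigma\in\{\pm1\}$, so the line $\diag{d}$ is twisted by the $\ZZ/2\ZZ$-valued cocycle $\sigma\mapsto\det a_\sigma$, whose class I must identify. Here I would use the commutative diagram already recorded in the text: since $\det$ restricts to the product map $p$ on $D$, we have $\det_*\circ j_* = p_*$, and because every $n$-dimensional form diagonalizes, $j_*$ is surjective, so I may write $[a] = j_*(c)$ with $c = (c_1,\ldots,c_n)$. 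Then $\det_*[a] = p_*(c) = c_1\cdots c_n$, while by part (a) the twisted form $q'' = \diag{c_1 b_1,\ldots,c_n b_n}$ has discriminant $(c_1\cdots c_n)\disc(q)$; comparing, $\det_*[a] = \disc(q'')\disc(q)$ in $k^\times/k^{\times 2}$. Thus the line $\diag{d}$ becomes $\diag{d\cdot\disc(q'')\disc(q)}$, and since $\disc(q') = d\,\disc(q)$ this equals $\diag{\disc(q'')\disc(q')}$; assembling the two summands gives the claimed formula.

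The substantive point is this last computation of $\det_*$ in part (c); everything else is the twisting dictionary together with the orthogonal-sum compatibility. That compatibility and the one-dimensional Kummer computation underlying part (a) are the technical facts I would want to state carefully, but they are standard, so I expect the main obstacle to be purely organizational: keeping the discriminant bookkeeping mod squares straight while passing between $q$, $q'$, and the twisted form $q''$.
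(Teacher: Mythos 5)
Your proposal is correct, and its backbone --- the twisting dictionary plus the Kummer computation on one-dimensional summands --- is the same as the paper's proof of part (a), which likewise twists the Galois action on $(V_K,q_K)$ by the cocycle $\sigma \mapsto \diagm{s_1^{-1}\sigma(s_1),\ldots,s_n^{-1}\sigma(s_n)}$ and reads off the form on the invariants. Where you genuinely diverge is in parts (b) and (c). The paper handles both uniformly by pulling every class back to the cohomology of the diagonal subgroups: it writes any $q'' \simeq \diag{x_1,\ldots,x_n}$ as $i_{q*}(x_1/b_1,\ldots,x_n/b_n)$ (resp.\ $j_{q*}$), observes that $f$ restricted to diagonal matrices induces $(c_1,\ldots,c_n) \mapsto (1,c_1,\ldots,c_n)$ in case (b) and $(c_1,\ldots,c_n) \mapsto (c_1\cdots c_n, c_1,\ldots,c_n)$ in case (c), and then applies part (a) to $q'$. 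You instead twist directly, using that the block cocycle $\sigma \mapsto \diagm{1} \oplus a_\sigma$ (resp.\ $\diagm{\det a_\sigma} \oplus a_\sigma$) respects the orthogonal decomposition $\diag{d} \perp q$; this makes your (b) arguably cleaner, since it needs no diagonalization or surjectivity statement at all. The diagonal reduction then re-enters only in your (c), to identify the class of the sign cocycle $\sigma \mapsto \det a_\sigma$ as $\disc(q'')\disc(q)$ --- which is exactly the content of the first coordinate $c_1\cdots c_n$ in the paper's map $F$, so the two computations coincide at that point. Both routes are complete and rest on the same two facts (the twisting description of $H^1(k,\Ort(q))$ and diagonalizability of forms in characteristic $\neq 2$); the paper's buys a two-line, uniform treatment of (b) and (c), while yours isolates more clearly which part of the argument actually requires the determinant bookkeeping. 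One small point worth making explicit in your write-up: identifying a class in $H^1(k,\SO(q'))$ with a quadratic form uses the injectivity of $H^1(k,\SO(q')) \to H^1(k,\Ort(q'))$, which the paper records just before the lemma; also note that your final identity $d\cdot\disc(q'')\disc(q) = \disc(q'')\disc(q')$ uses the determinant (unsigned) convention for $\disc$, which is indeed the convention in force throughout the paper.
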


\begin{proof}
(a) This is certainly well known; for the lack of a direct reference, we supply a proof using Galois descent. Let $V$ be the $k$-vector space where $q$ is defined and write $q = \sum_i b_i v_i^* \otimes v_i^*$, where $v_1, \ldots, v_n$ is a basis of $V$.  

Let $c = (c_1, \ldots, c_n) \in (k^\times/k^{\times 2})^n$ and define $s_i \in k_s^\times$ satisfying $s_i^2=c_i \! \mod k^{\times 2}$. Fix a finite Galois extension $K/k$ containing the $s_i$ and set $\Gamma_K = \mathrm{Gal}(K/k)$. Recall that a $1$-cocycle $\Gamma_K \to (\ZZ/2\ZZ)^n$ representing $c$ is given by 
$$
\sigma \mapsto (s_1^{-1}\sigma(s_1), \ldots, s_n^{-1}\sigma(s_n)).
$$ 
Thus $j_*(c)$ is represented by the $1$-cocycle $a\colon\sigma \mapsto \diagm{s_1^{-1}\sigma(s_1), \ldots, s_n^{-1}\sigma(s_n)}$. The quadratic space associated to $a$ can be obtained by twisting the Galois action on the pair $(V_K,q_K)$ and taking the $\Gamma_K$-invariant elements. Note that the twisted action is defined on $v = \sum_i \lambda_i v_i \in V_K$ ($\lambda_i \in K$) as
$$
\sigma * v = \sum_i s_i^{-1} \sigma(s_i) \sigma(\lambda_i) v_i,\ \sigma \in \Gamma_K,
$$ 
and $v \in {}_aV_K$ is $\Gamma_K$-invariant if and only if $\lambda_i = s_i f_i$ for some $f_i \in k$ and all $i$. A $k$-basis of $W = ({}_aV_K)^{\Gamma_K}$ is given by $w_1 = s_1 v_1, \ldots, w_n = s_n v_n$. The corresponding quadratic form is 
$$
q' = \sum_i b_i v_i^* \otimes v_i^* = \sum_i s_i^2 b_i  (s_i^{-1}v_i^*) \otimes (s_i^{-1}v_i^*) \simeq \sum_i c_i b_i w_i^* \otimes w_i^*.
$$
This finishes the proof.

(b) Let $i_q\colon D_{0,q} \hookrightarrow \SO(q)$ and $i_{q'}\colon D_{0,q'} \hookrightarrow \SO(q')$ be the embeddings corresponding to the subgroups of diagonal matrices. It is easy to see that the restriction $f|_{D_{0,q}}\colon D_{0,q} \to D_{0,q'}$ induces a map $F\colon H^1(k,D_{0,q}) \to H^1(k,D_{0,q'})$ sending $(c_1, \ldots, c_n)$ to $(1, c_1, \ldots, c_n)$. Hence, if $q'' = \diag{x_1, \ldots, x_n}$ is any quadratic form such that $\mathrm{disc}(q'')=\mathrm{disc}(q)$, it follows that
$$
f_*(q'') = f_*(i_{q*}(x_1/b_1, \ldots, x_n/b_n)) = i_{q'*}(F(x_1/b_1, \ldots, x_n/b_n)) 
= \diag{d} \perp q''.
$$

(c) Let $j_q \colon D_{q} \hookrightarrow \Ort(q)$ and $i_{q'}\colon D_{0,q'} \hookrightarrow \SO(q')$ be as before. Note that the restriction $f|_{D_{q}}\colon D_{q} \to D_{0,q'}$ induces a map $F\colon H^1(k,D_{q}) \to H^1(k,D_{0,q'})$ sending $(c_1, \ldots, c_n)$ to $(c_1\ldots c_n, c_1, \ldots, c_n)$. The result follows using a similar argument to the one in part (b).
\end{proof}

We are ready to prove the main result of this section.

\begin{proof} [Proof of Theorem \ref{Theorem_Conjugacy}]
(a) Let $H \cong \ZZ/2\ZZ$ be generated by $\diagm{1,-1,-1}$ inside $\SO(q)$. By Lemma \ref{Lemma_Centralizer}(a), its centralizer $Z$ is isomorphic to $\Ort(\diag{-b,ab})$. By Lemma \ref{Lemma_Cohom_Orthogonal}(c), the natural inclusion $Z \hookrightarrow \SO(q)$ induces a map on cohomology $H^1(k,Z) \to H^1(k,\SO(q))$ sending a binary quadratic form $q'$ to $\diag{\mathrm{disc}(q')} \perp q'$. Hence, the kernel $H^1(k,Z)_0$ consists of the binary quadratic forms $q'$ such that $\diag{\mathrm{disc}(q')} \perp q' \simeq q$ (in particular, $\mathrm{disc}(q') \in D(q)$). Define a map $\Psi\colon H^1(k,Z)_0 \to D(q)$ sending $q' \mapsto \mathrm{disc}(q')$. If $q', q'' \in H^1(k,Z)_0$ satisfy $\Psi(q') = \Psi(q'')$, then $\diag{\Psi(q')} \perp q' \simeq \diag{\Psi(q'')} \perp q'' \simeq q$ implies $q' \simeq q''$ by Witt's Cancellation Theorem, so $\Psi$ is injective. To prove that $\Psi$ is surjective, let $d \in D(q)$ be arbitrary. Then $q = \diag{d} \perp q'$ for some quadratic form $q'$. Taking discriminants yields $\mathrm{disc}(q') = d$. This implies that $q' \in H^1(k,Z)_0$ and $\Psi(q') = d$. This proves that $H^1(k,Z)_0$ is in natural bijection with $D(q)$. Moreover, since the normalizer $N$ coincides with $Z(k_s)$, the action of $N$ on $H^1(k,Z)$ is trivial. This finishes the proof.

(b) Let $H \cong (\ZZ/2\ZZ)^2$ be the subgroup $D_0$ of diagonal matrices inside $\SO(q)$. By Lemma \ref{Lemma_Centralizer}(b), we have that $Z = H$ and the map $H^1(k,Z) \to H^1(k,\SO(q))$ induced by the natural inclusion sends $(x,y,z)$, with $xyz = 1\!\mod k^{\times 2}$, to $\diag{-ax,-by,abz}$. Therefore, $(x,y,z) \in H^1(k,Z)_0$ if and only if $\diag{-ax,-by,abz} \simeq \diag{-a,-b,ab}$, which is clearly equivalent to $(ax,by)_2 \cong (a,b)_2$. It follows easily that $H^1(k,Z)_0 \cong Q_{a,b}$.

We should now determine the action of the normalizer $N$ on $Q_{a,b}$. By Lemma \ref{Lemma_Centralizer}(b), we have $N = H \rtimes N'$, where $N' \cong S_3$. Since $H$ acts trivially on $H^1(k,Z)$, we only need to determine how $N'$ acts on $H^1(k,Z)_0$. Recall that $N'$ is generated by the matrices
$$
s = \left(\begin{array}{ccc}
0 & vu^{-1} & 0\\
0 & 0 & u \\
v^{-1} & 0 & 0 \\
\end{array}\right), \
t = \left(\begin{array}{ccc}
-1 & 0 & 0\\
0 & 0 & -u \\
0 & -u^{-1} & 0 \\
\end{array}\right),
$$   
where $u = \sqrt{-a}$ and $v = \sqrt{-b}$. Note that a $1$-cocycle $l\colon \mathrm{Gal}(k_s/k) \to Z(k_s)$ representing $(x,y) \in Q_{a,b}$ is given by 
$$
\sigma \mapsto l_{\sigma} = \diagm{x_1^{-1} \sigma(x_1), y_1^{-1} \sigma(y_1), x_1^{-1} \sigma(x_1) y_1^{-1} \sigma(y_1)},
$$
where $x_1^2 = x\! \mod k^{\times 2}$ and $y_1^2 = y\! \mod k^{\times 2}$. We compute
$$
s^{-1} l_\sigma \sigma(s) = 
\diagm{(v x_1 y_1)^{-1} \sigma(v x_1 y_1), (u v x_1)^{-1} \sigma(u v x_1), (u y_1)^{-1}\sigma(u y_1)},
$$
and
$$
t^{-1} l_\sigma \sigma(t) = 
\diagm{x_1^{-1} \sigma(x_1),(u x_1 y_1)^{-1} \sigma(u x_1 y_1),(u y_1)^{-1}\sigma(u y_1)}.
$$
Thus, the $1$-cocycles $\sigma \mapsto s^{-1} l_\sigma \sigma(s)$ and $\sigma \mapsto t^{-1} l_\sigma \sigma(t)$ correspond to the elements $(-bxy, abx)$ and $(x, -axy)$ in $Q_{a,b}$, respectively. Hence $N' \cong S_3$ acts on $Q_{a,b}$ as claimed and the result follows easily.

(c) We may assume that we are in the situation of Lemma \ref{Lemma_Centralizer}(c), i.e., $q = \diag{-\beta_n, -\gamma, \beta_n\gamma}$ and the centralizer $Z$ of $\ZZ/n\ZZ$ is isomorphic to $\SO(\diag{-\gamma,\beta_n\gamma})$. By Lemma \ref{Lemma_Cohom_Orthogonal}(b), the natural map $H^1(k,Z) \to H^1(k,\SO(q))$ sends a binary quadratic form $q'$ (with $\mathrm{disc}(q')=-\beta_n$) to $\diag{-\beta_n} \perp q'$. By Witt's Cancellation Theorem, the kernel $H^1(k,Z)_0$ is trivial and the claim follows.

(d) We may assume that $q = \diag{-\beta_n, -\gamma, \beta_n\gamma}$ and $H \cong D_{2n}$ is as in Lemma \ref{Lemma_Centralizer}(d). The centralizer $Z \cong \ZZ/2\ZZ$ is generated by $\diagm{1,-1,-1}$. Let $D_0 \subset \SO(q)$ be the subgroup of diagonal matrices; the natural inclusion $Z \hookrightarrow D_0$ induces a map $H^1(k,Z) \cong k^\times/k^{\times 2} \to H^1(k,D_0)$ sending $c \in k^\times/k^{\times 2}$ to $(1, c, c)$. Therefore the natural map $H^1(k,Z) \to H^1(k,\SO(q))$ sends $c \in k^\times/k^{\times 2}$ to $\diag{-\beta_n,-c\gamma,c\beta_n\gamma}$. By Witt's Cancellation Theorem, the kernel $H^1(k,Z)_0$ is given by those square classes $c$ such that $\diag{-c\gamma,c\beta_n\gamma} \simeq \diag{-\gamma,\beta_n\gamma}$. It follows easily that $c \in H^1(k,Z)_0$ if and only if $\diag{1,-\beta_n}$ represents $c$, i.e., $H^1(k,Z)_0 \cong D(\diag{1,-\beta_n})$.

We now consider the action of the normalizer $N \cong D_{4n}$ generated by 
$$
\ol{s} = \left(\begin{array}{ccc}
1 & 0 & 0\\
0 & \alpha_{2n} & 2\alpha_{2n} \beta_{2n} \\
0 & (2\alpha_{2n})^{-1} & \alpha_{2n} \\
\end{array}\right), \
t = \left(\begin{array}{ccc}
-1 & 0 & 0\\
0 & 1 & 0 \\
0 & 0 & -1 \\
\end{array}\right).
$$
Note that $H$ is the subgroup of $N$ generated by $s = \ol{s}^2$ and $t$. Clearly the action of $H$ on $H^1(k,Z)_0$ is trivial, so we only need to compute the action of $\ol{s}$ on $H^1(k,Z)_0$. Unraveling the identification $H^1(k,Z)_0 \cong D(\diag{1,-\beta_n})$, we see that a $1$-cocycle $l\colon \mathrm{Gal}(k_s/k) \to Z(k_s)$ representing $c \in H^1(k,Z)_0$ is given by $\sigma \to l_\sigma = \diagm{1,c_1^{-1}\sigma(c_1),c_1^{-1}\sigma(c_1)}$, where $c_1 \in k_s^\times$ satisfies $c_1^2 = c\! \mod k^{\times 2}$. Then we compute the $1$-cocycle
$$
\sigma \mapsto \ol{s}^{-1} l_\sigma \sigma(\ol{s}) = 
\diagm{1, (\alpha_{2n} c_1)^{-1}\sigma(\alpha_{2n} c_1), (\alpha_{2n} c_1)^{-1}\sigma(\alpha_{2n} c_1)}.
$$
It corresponds to the square class of $(\alpha_{2n} c_1)^2$, which is precisely $\frac{\alpha_n+1}{2} c$. This completes the proof.

(e) This is immediate from Lemma \ref{Lemma_Centralizer}(e).
\end{proof}

\begin{remark}
Suppose we are in the situation of Theorem \ref{Theorem_Conjugacy}(d) and $n$ is odd. Then a simple computation shows that $\frac{\alpha_n+1}{2} = \frac{1}{4}\left(\omega_n^{\frac{n+1}{2}} + \omega_n^{-\frac{n+1}{2}}\right)^2 \in k^{\times 2}$. Therefore, the conjugacy classes of $D_{2n}$ are in natural bijective correspondence with $D(\diag{1,-\beta_n})$ for $n$ odd. This is not necessarily true for even $n$.
\end{remark}

We now make the correspondences in parts (a), (b) and (d) of Theorem \ref{Theorem_Conjugacy} more explicit, by exhibiting representatives for each conjugacy class.
\begin{itemize}
\item Let $q = \diag{-a,-b,ab}$, let $d \in D(q)$ and let $q' = \diag{d,x,y}$ be a quadratic form isometric to $q$. Select $P \in \GL_3(k)$ such that $q = q' \circ P$. Then the element $d \in D(q)$ corresponds to the conjugacy class of the subgroup $P^{-1} H P \subset \SO(q)$, where $H \cong \ZZ/2\ZZ$ is generated by $\diagm{1,-1,-1}$.
\item Let $q = \diag{-a,-b,ab}$, let $(x,y) \in Q_{a,b}$ and let $q' = \diag{-ax,-by,abxy}$. Select $P \in \GL_3(k)$ such that $q = q' \circ P$. The element $(x,y)$ corresponds to the conjugacy class of the subgroup $P^{-1}D_0P \subset \SO(q)$, where $D_0  \cong (\ZZ/2\ZZ)^2$ is the subgroup of diagonal matrices in $\SO(q)$. Elements of $Q_{a,b}$ in the same $S_3$-orbit yield subgroups which are conjugate over $k$. 
\item Let $q = \diag{-\beta_n,-\gamma,\beta_n\gamma}$ and let $c \in D(\diag{1,-\beta_n})$. Then the quadratic form $q' = \diag{-\beta_n,-c\gamma,c\beta_n\gamma}$ is isometric to $q$, so we may select $P \in \GL_3(k)$ such that $q = q' \circ P$. The element $c$ corresponds to the conjugacy class of the subgroup $P^{-1} H P \subset \SO(q)$, where $H \cong D_{2n}$ is as in Lemma \ref{Lemma_Centralizer}(d). The elements $c$ and $\frac{\alpha_n+1}{2} c$ yield subgroups which are conjugate over $k$. 
\end{itemize}

\end{document}